\DeclareFontShape{T1}{lmr}{bx}{sc} { <-> ssub * cmr/bx/sc }{}
\pgfplotsset{compat=newest}
\numberwithin{equation}{section}
\setlist[enumerate]{label=(\roman*)}
\theoremstyle{plain}
\newtheorem{Theorem}{Theorem}[section]
\newtheorem{Proposition}[Theorem]{Proposition}
\newtheorem{Lemma}[Theorem]{Lemma}
\newtheorem{Corollary}[Theorem]{Corollary}
\newtheorem{Remark}[Theorem]{Remark}
\newtheorem{Definition}[Theorem]{Definition}
\newtheorem{Assumption}[Theorem]{Assumption}
\newtheorem{Example}[Theorem]{Example}
\newcommand{\ddt}{\tfrac{\text{\normalfont d}}{\text{\normalfont d}t}}
\newcommand{\N}{\mathbb{N}}
\newcommand{\R}{\mathbb{R}}
\newcommand{\state}{x}
\newcommand{\stateDim}{n}
\newcommand{\dmdState}{\tilde{x}}
\newcommand{\timeStep}{h}
\newcommand{\nrSnapshots}{m}
\newcommand{\Ad}{A_\timeStep}
\newcommand{\dmdAg}{A_{\mathrm{DMD}}}  
\newcommand{\tdmdAg}{\tilde A_{\mathsf{DMD}}}  
\newcommand{\dmdX}{X}
\newcommand{\dmdY}{Z}
\newcommand{\frobnorm}[1]{\left\|#1\right\|_\mathrm{F}}
\newcommand{\pseudo}[1]{#1^\dagger}
\newcommand{\RKmatrix}{\mathcal{A}}
\newcommand{\RKvector}{\beta}
\DeclareMathOperator{\rank}{rank}
\DeclareMathOperator{\spann}{span}
\newcommand{\GL}[2]{\mathrm{GL}_{#1}(#2)}
\newcommand{\DMD}{\textsf{DMD}\xspace}
\newcommand{\IVP}{\textsf{IVP}\xspace}
\newcommand{\ODE}{\textsf{ODE}\xspace}
\newcommand{\RKM}{\textsf{RKM}\xspace}
\newcommand{\SVD}{\textsf{SVD}\xspace}
\title[Identification of LTI systems with DMD]{Identification of linear time-invariant systems with Dynamic Mode Decomposition}
\author{Jan Heiland${}^{\dagger,\ddagger}$ \and Benjamin Unger${}^\star$}
\address{${}^{\dagger}$  Max Planck Institute for Dynamics of Complex Technical Systems, Magdeburg, Germany}
\email{heiland@mpi-magdeburg.mpg.de}
\address{${}^{\ddagger}$ Faculty of Mathematics, Otto von Guericke University Magdeburg, Germany}
\email{jan.heiland@ovgu.de}
\address{${}^{\star}$ Stuttgart Center for Simulation Science (SC SimTech), University of Stuttgart, Universit\"{a}tsstr.~32, 70569 Stuttgart, Germany}
\email{benjamin.unger@simtech.uni-stuttgart.de}
\date{\today}
\keywords{dynamic mode decomposition; system identification; Runge-Kutta method}
\begin{document}

\begin{abstract}
	Dynamic mode decomposition (\DMD) is a popular data-driven framework to extract linear dynamics from complex high-dimensional systems. In this work, we study the system identification properties of \DMD. We first show that \DMD is invariant under linear transformations in the image of the data matrix.  If, in addition, the data is constructed from a linear time-invariant system, then we prove that \DMD can recover the original dynamics under mild conditions. If the linear dynamics are discretized with a Runge-Kutta method, then we further classify the error of the \DMD approximation and detail that for one-stage Runge-Kutta methods even the continuous dynamics can be recovered with \DMD. A numerical example illustrates the theoretical findings.
\end{abstract}

\maketitle
{\footnotesize \textsc{Keywords:} dynamic mode decomposition; system identification; Runge-Kutta method}

\section{Introduction}

Dynamical systems play a fundamental role in many modern modeling approaches of physical and chemical phenomena. The need for high fidelity models often results in large-scale dynamical systems, which are computationally demanding to solve, analyze, and optimize. Thus the last three decades have seen significant efforts to replace the so-called full-order model, which is considered the \emph{truth model}, with a computationally cheaper surrogate model \cite{BenCOW17,BauBF14,QuaMN16,Ant05,BenGW15,HesRS16,AntBG20}. Often, the surrogate model is constructed by projecting the dynamical system onto a low-dimensional manifold, thus requiring a state-space description of the differential equation. 

If a mathematical model is not available or not suited for modification,
data-driven methods like the \emph{Loewner framework} \cite{MayA07,BeaG12},
\emph{vector fitting} \cite{GusS99,DrmGB15a,DrmGB15b}, \emph{operator inference}
\cite{PehW16}, or \emph{dynamic mode decomposition}
(\DMD) \cite{KutBBP16} may be used to create a low-dimensional realization
directly from measurement or simulation data of the system. Suppose the dynamical system that creates the data is linear. In that case, the Loewner framework and vector fitting are -- under some technical assumptions -- able to recover the original dynamical system and hence serve as system identification tools. Despite the popularity of \DMD, a similar analysis seems to be missing, and this paper aims to close this gap. 

Since \DMD creates a discrete, linear time-invariant dynamical system from data, we are interested answering the following questions:
\begin{enumerate}
	\item What is the impact of transformations of the data on the resulting \DMD approximation?
	\item Assume that the data used to generate the \DMD approximation is obtained from a linear differential equation. Can we estimate the error between the continuous dynamics and the \DMD approximation?
	\item Are there situations in which we are even able to recover the original dynamical system from the \DMD approximation?
\end{enumerate}

It is essential to know, how the data for the construction of the \DMD model is generated to answer these questions. Assuming exact measurements of the solution may be valid from a theoretical perspective only. Instead, we take the view of a numerical analyst and assume that the data is obtained via time integration of the dynamics with a general \emph{Runge-Kutta method} (\RKM) with known order of convergence. Thus we can summarize the questions graphically as in \Cref{fig:problemSetup}. Hereby the dashed line represents the questions that we aim to answer in this paper.

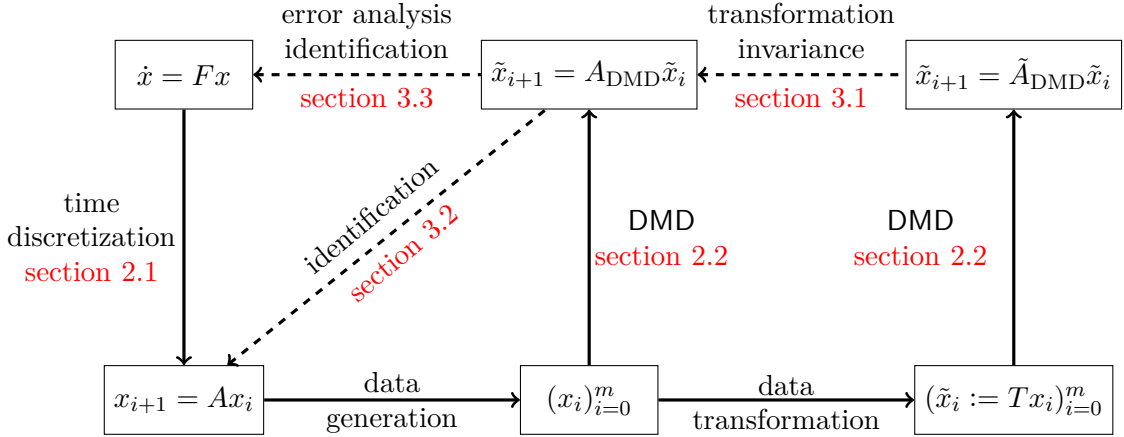
\begin{figure}[ht]
	\centering
	\begin{tikzpicture}[auto,node distance=3.4cm]
		\node[entity] (dynSys) {$\dot{\state} = F\state$};
		\node[entity] (discSys) [below = of dynSys]	{$\state_{i+1} = A\state_{i}$};
		\node[entity] (data) [right = of discSys] {$(\state_i)_{i=0}^{\nrSnapshots}$};
		\node[entity] (transformedData) [right = of data] {$(\tilde{\state}_i\vcentcolon=T\state_i)_{i=0}^{\nrSnapshots}$};
		\node[entity] (dmdSys) [above = of data]	{$\dmdState_{i+1} = A_{\mathrm{DMD}}\dmdState_i$};
		\node[entity] (transformedDmdSys) [above = of transformedData]	{$\dmdState_{i+1} = \tilde{A}_{\mathrm{DMD}}\dmdState_i$};
		
		\path[->,very thick] (dynSys) edge node[align=center,xshift=-2.5cm] {time\\discretization\\\cref{subsec:RKM}} (discSys);
      	\draw[->, very thick] (discSys) -- node[align=center,yshift=-1.5em] {data\\generation} (data);
      	\path[->,very thick] (data) edge node[align=center,xshift=2cm] {\DMD\\\cref{subsec:DMD}} (dmdSys);
      	\path[->,very thick] (data) edge node[align=center,yshift=-1.4em] {data\\transformation} (transformedData);
      	\path[->,very thick] (transformedData) edge node[align=center,xshift=-.2cm] {\DMD\\\cref{subsec:DMD}} (transformedDmdSys);
      	
      	\path[<-,dashed,very thick] (dynSys) edge node[align=center,yshift=-1.5em] {error analysis\\identification\\[.5em]\cref{subsec:continuousTime}} (dmdSys);
      	\path[<-,dashed,very thick] (dmdSys) edge node[align=center,yshift=-1.5em] {transformation\\invariance\\[.5em]\cref{subsec:dataTransformation}} (transformedDmdSys);
      	\path[<-,dashed,very thick] (discSys) edge node[align=center,xshift=3.5em,yshift=1em,rotate=40] {identification\\[.5em]\cref{subsec:discreteTime}} (dmdSys);
	\end{tikzpicture}
	\caption{Problem setup}
	\label{fig:problemSetup}
\end{figure}

\noindent Our main results are the following:
\begin{itemize}
	\item We show in \Cref{thm:invariance} that \DMD is invariant in the image of the data under linear transformations of the data.
	\item \Cref{thm:exactDMD} details that \DMD is able to identify to discrete-time dynamics, i.e., for every initial value in the image of the data, the \DMD approximation exactly recovers the discrete-time dynamics. 
	\item In Theorem~\ref{thm:DMDapproximationError} we show that if the \DMD approximation is constructed with data that is obtained via a \RKM, then the approximation error of \DMD with respect to the ordinary differential equation is in the order of the error of the \RKM. If a one-stage \RKM is used and the data is sufficiently rich, then the continous-time dynamics, i.e., the matrix $F$ in \Cref{fig:problemSetup} can be recovered, cf.~\Cref{lem:exactRKDMD}.
\end{itemize}

To render the manuscript self-contained, we recall important definitions and results for \RKM and \DMD in the upcoming \cref{subsec:RKM,subsec:DMD}, respectively, before we present our analysis in \cref{sec:errorAnalysis}. We conclude with a numerical example to confirm the theoretical findings.

\subsection*{Notation}
As is standard, $\mathbb{N}$ and $\mathbb{R}$ denote the positive integers and the real numbers, respectively. For any $n,m\in\mathbb{N}$, we denote with $\mathbb{R}^{n\times m}$ the set of $n\times m$ matrices with real entries. The set of nonsingular matrices of size $n\times n$ is denoted with $\GL{n}{\R}$. Let $A=[a_{ij}]\in\mathbb{R}^{n\times m}$, $B\in\mathbb{R}^{p\times q}$, and $x_i\in\mathbb{R}^{n}$ ($i=1,\ldots,k$). The transpose and the Moore-Penrose pseudoinverse of $A$ are denoted with $A^T$ and $\pseudo{A}$, respectively. The Kronecker product $\otimes$ is defined as 

\begin{displaymath}
	A\otimes B \vcentcolon= \left[\begin{smallmatrix} a_{11}B & \cdots & a_{1m}B\\
		\vdots & & \vdots\\
		a_{n1}B & \cdots & a_{nm}B
	\end{smallmatrix}\right]\in\mathbb{R}^{np\times mq}.
\end{displaymath}

We will use $\spann\{x_1,\ldots,x_k\}$ to denote the linear span of the vectors
$x_1,\ldots,x_k$ and also casually write $\spann \{X\}=\spann\{x_1,\ldots,x_k\}$
for the column space of the matrix $X$ with $\{x_1,\ldots,x_k\}$ as its columns.
For $A\in \mathbb R^{n\times n}$ and a vector $x_0\in \mathbb R^{n}$, we denote
the reachable space as $\mathcal C(x_0, A)=\spann\{x_0, Ax_0, \dotsc,
A^{n-1}x_0\}$. 
For a continuously differentiable function $\state:\mathbb{I}\to\mathbb{R}^n$ from the interval $\mathbb{I}\subseteq\mathbb{R}$ to the vector space $\mathbb{R}^n$ we use the notation $\dot{\state} \vcentcolon= \ddt \state$ to denote the derivative with respect to the independent variable $t$, which we refer to as the time.

\section{Preliminaries} 
As outlined in the introduction, \DMD creates a finite-dimensional linear model to approximate the
original dynamics. Thus, in view of possibly exact system identification, we need to assume that the data that is fed to the \DMD algorithm is obtained from a linear \ODE, which in the sequel is denoted by
\begin{subequations}
	\label{eq:IVP}
	\begin{equation}
		\label{eq:ODE}
		\dot{\state}(t) = F\state(t)
	\end{equation}
  with $F\in\mathbb{R}^{\stateDim\times\stateDim}$. To fix a solution of \eqref{eq:ODE}, we prescribe the initial condition
	\begin{equation}
		\label{eq:IC}
		\state(0) = \state_0\in\mathbb{R}^{\stateDim},
	\end{equation}
\end{subequations}
and denote the solution of the \emph{initial value problem} (\IVP) as $\state(t;\state_0) \vcentcolon= \exp(F t)\state_0$.

\begin{Remark}
  While a \DMD approximation, despite its linearity, may well reproduce
  trajectories of nonlinear systems (see, e.g., \cite{Mez05}), the question of \DMD being able to recover the full dynamics has to focus on linear systems. 
  Here, the key observation is that a \DMD approximation is a finite-dimensional linear map. In contrast, the encoding of nonlinear systems via a linear operator necessarily needs an infinite-dimensional mapping.
\end{Remark}

\subsection{Runge-Kutta methods}
\label{subsec:RKM}
To solve the \IVP~\eqref{eq:IVP} numerically, we employ a \RKM, which is a common one-step method to approximate ordinary and differential-algebraic equations \cite{HaiNW08,KunM06}. More precisely, given a step size $\timeStep>0$, the solution of the \IVP~\eqref{eq:IVP} is approximated via the sequence $\state_i \approx \state(t_0 + i\timeStep)$ given by
\begin{subequations}
	\label{eq:RKmethod}
	\begin{equation}
		\state_{i+1} = \state_i + \timeStep\sum_{j=1}^s \beta_j k_j,
	\end{equation}
	with the so-called \emph{internal stages} $k_j\in\mathbb{R}^\stateDim$ (implicitly) defined via
	\begin{equation}
		\label{eq:internalStages}k_j = F\state_i + \timeStep \sum_{\ell=1}^s \alpha_{j,\ell} F k_\ell \qquad\text{for}\ j=1,\ldots,s.
	\end{equation}
\end{subequations}
Using the matrix notation $\RKmatrix = [\alpha_{j,\ell}]\in\mathbb{R}^{s\times s}$ and $\RKvector = [\beta_{j}]\in\mathbb{R}^s$ the $s$-stage \RKM defined via \eqref{eq:RKmethod} is conveniently summarized with the pair $(\RKmatrix,\RKvector)$.  Note that we restrict our presentation to linear time-invariant dynamics and hence do not require the full Butcher tableau.

Since the \ODE~\eqref{eq:ODE} is linear, we can rewrite the internal stages as
\begin{equation}
	\label{eq:internalStages2}
	\begin{bmatrix}
		I_s - \timeStep \alpha_{1,1}F & -\timeStep \alpha_{1,2}F & \ldots & -\timeStep\alpha_{1,s}F\\
		-\timeStep\alpha_{2,1}F & I_s - \timeStep\alpha_{2,2}F & \ldots & -\timeStep\alpha_{2,s}F\\
		\vdots & \ddots & \ddots & \vdots\\
		-\timeStep\alpha_{s,1}F & \cdots & -\timeStep\alpha_{s,s-1}F & I_s - \timeStep\alpha_{s,s}F
	\end{bmatrix}
	\begin{bmatrix}
		k_1\\k_2\\\vdots\\k_s
	\end{bmatrix} = \begin{bmatrix}
		F\state_i\\
		F\state_i\\
		\vdots\\
		F\state_i
	\end{bmatrix}
\end{equation}
Setting $k\vcentcolon= \begin{bmatrix}
	k_1^T & \ldots & k_s^T
\end{bmatrix}^T\in\mathbb{R}^{sn}$ and $e \vcentcolon= \begin{bmatrix} 1 & \ldots 1\end{bmatrix}^T\in\mathbb{R}^s$, the linear system in \eqref{eq:internalStages2} can be written as
\begin{equation}
	(I_s\otimes I_{\stateDim} - \timeStep\RKmatrix\otimes F)k = (e\otimes F)\state_i,
\end{equation}
where $\otimes$ denotes the Kronecker product. If $\timeStep$ is small enough, the matrix $(I_s\otimes I_{\stateDim} - \timeStep\RKmatrix\otimes F)$ is invertible and thus we obtain the discrete linear system
\begin{align*}
	\state_{i+1} &= \state_i + \timeStep\sum_{j=1}^s \beta_jk_j = \state_i + \timeStep (\RKvector^T\otimes I_{\stateDim})k\\
	&= \state_i + \timeStep (\RKvector^T\otimes I_{\stateDim}) \left(I_s\otimes I_{\stateDim} - \timeStep\RKmatrix\otimes F\right)^{-1}(e\otimes F)\state_i = A_{\timeStep}\state_i,
\end{align*}
with 
\begin{equation}
	\label{eq:discretizationRungeKutta}
	A_{\timeStep} \vcentcolon= I_{\stateDim} + \timeStep (\RKvector^T\otimes I_{\stateDim}) \left(I_s\otimes I_{\stateDim} - \timeStep\RKmatrix\otimes F\right)^{-1}(e\otimes F).
\end{equation}

\begin{Example}
	The explicit Euler method is given as $(\RKmatrix,\RKvector) = (0,1)$ and according to \eqref{eq:discretizationRungeKutta} we obtain the well-known formula $A_{\timeStep} = I_{\stateDim} + \timeStep F$. For the implicit Euler method $(\RKmatrix,\RKvector) = (1,1)$ the discrete system matrix is given by 
	\begin{displaymath}
		A_{\timeStep} = I_{\stateDim} + \timeStep(I_{\stateDim} - \timeStep F)^{-1}F = (I_{\stateDim} - \timeStep F)^{-1}(I_{\stateDim} - \timeStep F + \timeStep F) = (I_{\stateDim}-\timeStep F)^{-1}.
	\end{displaymath}
\end{Example}

To guarantee that the representation~\eqref{eq:discretizationRungeKutta} is valid, we will make the following assumption throughout the manuscript.

\begin{Assumption}
	\label{ass:suitableStepsize}
	For any $s$-stage \RKM $(\RKmatrix,\RKvector)$ and any dynamical system matrix $F\in\mathbb{R}^{\stateDim\times\stateDim}$ we assume that the step size $\timeStep$ is chosen such that the matrix $I_{s\stateDim} - \timeStep \RKmatrix\otimes F$ is nonsingular.
\end{Assumption}

\begin{Remark}
	Using \Cref{ass:suitableStepsize}, the matrix $I_{s\stateDim} - \timeStep \RKmatrix\otimes F$ is nonsingular and thus there exists a polynomial $p = \sum_{k=0}^{s\stateDim-1} p_kt^k \in\mathbb{R}[t]$ of degree at most $s\stateDim-1$ depending on the step size $\timeStep$ such that
	\begin{align*}
		\left(I_{s\stateDim} - \timeStep \RKmatrix\otimes F\right)^{-1} &= p(I_{s\stateDim} - \timeStep \RKmatrix\otimes F) = \sum_{k=0}^{s\stateDim-1} p_k \left(I_{s\stateDim} - \timeStep \RKmatrix\otimes F\right)^k\\
		&= \sum_{k=0}^{s\stateDim-1}p_k\sum_{\rho=0}^k \binom{k}{\rho} h^{\rho} (\RKmatrix^\rho\otimes F^\rho),
	\end{align*}
	where the last equality follows from the binomial theorem. Consequently, we have
	\begin{align}
    \label{eq:ah-as-polynomial}
		A_{\timeStep} = I_{\stateDim} + \sum_{k=0}^{sn-1}p_k\sum_{\rho=0}^k \binom{k}{\rho} h^{\rho+1} \left(\beta^T\RKmatrix^\rho e\right) F^{\rho+1}.
	\end{align}
	Rearranging the terms together with the Cayley-Hamilton theorem implies the existence of a polynomial $\tilde{p}\in\mathbb{R}[t]$ of degree at most $\stateDim$ such that $A_{\timeStep} = \tilde{p}(F)$. As a direct consequence, we see that any eigenvector of $F$ is an eigenvector of $A_{\timeStep}$ and thus $A_{\timeStep}$ is diagonalizable if $F$ is diagonalizable.
\end{Remark}

Having computed the matrix $A_{\timeStep}$, the question that remains to be answered is the quality of the approximation $\|\state(i\timeStep;\state_0)-\state_i\|$, which yields the following well-known definition (cf.~\cite{HaiNW08}).

\begin{Definition}
	A \RKM $(\RKmatrix,\RKvector)$ has \emph{order $p$} if there exists a constant $C\geq 0$ (independent of $h$) such that
	\begin{equation}
		\label{eq:locerror-estimate}
		\|\state(\timeStep;\state_0)-\state_1\| \leq C\timeStep^{p+1}
	\end{equation}
	holds, where $\state_1 = A_{\timeStep}\state_0$ with $A_{\timeStep}$ defined as in \eqref{eq:discretizationRungeKutta}.
\end{Definition}

For one-step methods, it is well-known that the local errors -- as estimated in \eqref{eq:locerror-estimate} for the initial time step -- basically sum in the global error, such that the following estimate holds 

\begin{displaymath}
	\|\state(N\timeStep;x_0)-x_N\| \leq C\timeStep^p;
\end{displaymath}
see, e.g., \cite[Thm.~II.3.6]{HaiNW08}.

\subsection{Dynamic Mode Decomposition}
\label{subsec:DMD}
For $i=0,\ldots,\nrSnapshots$, assume data points $\state_i\in\R^{\stateDim}$ available.  The idea of \DMD is to determine a linear time-invariant relation between the data, i.e., finding a matrix $A_{\mathrm{DMD}}\in\R^{\stateDim\times \stateDim}$, such that the data approximately satisfies
\begin{displaymath}
	\state_{i+1} \approx A_{\mathrm{DMD}} \state_i\qquad \text{for $i=0,1,\ldots,\nrSnapshots-1$}.
\end{displaymath}
Following \cite{TuRLBK14} we introduce
\begin{equation}
	\label{eq:DMDdata}
	\dmdX \vcentcolon= \begin{bmatrix}
		\state_0 & \ldots & \state_{\nrSnapshots-1}
	\end{bmatrix}\in\mathbb{R}^{\stateDim\times\nrSnapshots}\qquad\text{and}\qquad
	\dmdY \vcentcolon= \begin{bmatrix}
		\state_1 & \ldots & \state_{\nrSnapshots}
	\end{bmatrix}\in\mathbb{R}^{\stateDim\times\nrSnapshots}.
\end{equation}
Then, the \DMD approximation matrix is defined as the minimum-norm solution of
\begin{equation}
	\label{eq:DMDminFrob}
	\min_{M\in\mathbb{R}^{\stateDim\times\stateDim}} \|\dmdY - M\dmdX\|_F,
\end{equation}
where $\|\cdot\|_F$ denotes the Frobenius norm. It is easy to show that the minimum-norm solution is given by $A_{\mathrm{DMD}} = \dmdY\pseudo{\dmdX}$ \cite{KutBBP16}, where $\pseudo{\dmdX}$ denotes the Moore-Penrose pseudoinverse of $\dmdX$. This motivates the following definition.

\begin{Definition}
	\label{def:exactDMD}
	Consider the data $\state_i\in\R^{\stateDim}$ for $i=0,1,\ldots,\nrSnapshots$ and associated data matrices $\dmdX$ and $\dmdY$ defined in~\eqref{eq:DMDdata}. Then the matrix $A_{\mathrm{DMD}} \vcentcolon= \dmdY\pseudo{\dmdX}$ is called the \emph{\DMD matrix} for $(\state_i)_{i=0}^{\nrSnapshots}$.  If the eigendecomposition of $A_{\mathrm{DMD}}$ exists, then the eigenvalues and eigenvectors of $A_{\mathrm{DMD}}$ are called \emph{\DMD eigenvalues} and \emph{\DMD modes} of $(state_i)_{i=0}^{\nrSnapshots}$, respectively.
\end{Definition}

The Moore-Penrose pseudoinverse, and thus also the \DMD matrix, can be computed
via the \emph{singular value decomposition} (\SVD); see, e.g., \cite[Ch.
5.5.4]{GolV96}: Let
\begin{displaymath}
	\begin{bmatrix}
		U & \bar{U}
	\end{bmatrix}\begin{bmatrix}
		\Sigma & 0\\
		0 & 0
	\end{bmatrix}\begin{bmatrix}
		V^\top\\
		\bar{V}^\top
	\end{bmatrix} = \dmdX
\end{displaymath}
denote the \SVD of $\dmdX$, with $r \vcentcolon= \rank(\dmdX)$,  $U\in\R^{\stateDim\times r}$, $\Sigma\in\R^{r\times r}$ and $\rank(\Sigma) = r$, and $V\in\R^{\nrSnapshots\times r}$. 
Then
\begin{equation}
  \label{eq:moore-penrose-def-svd}
  \pseudo X = 
  \begin{bmatrix}
		V & \bar{V}
	\end{bmatrix}
  \begin{bmatrix}
    \Sigma^{-1} & 0\\
		0 & 0
	\end{bmatrix}
  \begin{bmatrix}
		U^\top\\
		\bar{U}^\top
	\end{bmatrix} = V\Sigma^{-1}U^\top
\end{equation}
and, thus,
\begin{equation}
	\label{eq:DMDrealization}
	A_{\mathrm{DMD}} = \dmdY V\Sigma^{-1}U^T.
\end{equation}
For later reference, we call $U\Sigma V^\top = \dmdX$ the \emph{trimmed \SVD} of $\dmdX$.

\section{System identification and error analysis}
\label{sec:errorAnalysis}

In this section we present our main results. Before discussing system identification for discrete-time (cf.~\cref{subsec:discreteTime}) and continuous-time (cf.~\cref{subsec:continuousTime}) dynamical systems via \DMD, we study the impact of transformations of the data on \DMD in \cref{subsec:dataTransformation}.

\subsection{Data scaling and invariance of the DMD approximation}
\label{subsec:dataTransformation}

Scaling and more general transformation of data is often used to improve the performance of the methods that work on the data. Since \DMD is inherently related to the Moore-Penrose inverse, we first study the impact of a nonsingular matrix $T\in\GL{\stateDim}{\R}$ on the generalized inverse. To this purpose, consider a matrix $X\in\R^{\stateDim\times\nrSnapshots}$ with $r\vcentcolon= \rank(X)$. Let $X = U\Sigma V^\top$ denote the trimmed \SVD of $X$ with $U\in\R^{\stateDim\times r}$, $\Sigma\in\GL{r}{\R}$ and $V\in\R^{\nrSnapshots\times r}$. Let $TU = QR$ denote the QR-decomposition of $TU$ with $Q\in\R^{\stateDim\times \stateDim}$ and $R\in\R^{\stateDim\times r}$. We immediately obtain $\rank(RS) = r$. Let $R\Sigma = \widehat{U}\widehat{\Sigma}\widehat{V}^\top$ denote the trimmed \SVD of $R\Sigma$ with $\widehat{U}\in\R^{\stateDim\times r}$, $\widehat{\Sigma}\in\GL{r}{\R}$, and $\widehat{V}\in\R^{r\times r}$.  We immediately infer
\begin{equation}
	\label{eqn:transformedRightSingularVector}
	\widehat{V}\widehat{V}^\top = I_r. 
\end{equation}
It is easy to see that the matrices $U_T \vcentcolon= Q\widehat{U}\in\R^{\stateDim\times r}$, and $V_T \vcentcolon= V\widehat{V}\in\R^{\nrSnapshots\times r}$ satisfy $U_T^\top U_T = I_r = V_T^\top V_T$. The trimmed \SVD of $TX$ is thus given by
\begin{displaymath}
	TX = TU\Sigma V^\top = QR\Sigma V^\top = Q\widehat{U}\widehat{\Sigma}\widehat{V}^\top V^\top = U_T\widehat{\Sigma}V_T^\top.
\end{displaymath}
We thus obtain
\begin{equation*}
	\pseudo{(TX)}TX = V_T V_T\top = V\widehat{V}\widehat{V}^\top V^\top = VV^\top = \pseudo{X}\pseudo{X}
\end{equation*}
where we have used the identity~\eqref{eqn:transformedRightSingularVector}. We have thus shown the following result.

\begin{Proposition}
	\label{prop:MPItransformation}
	Let $X\in\R^{\stateDim\times\nrSnapshots}$ and $T\in\GL{\stateDim}{\R}$. Then  $\pseudo{(TX)}(TX) = \pseudo{X}{X}$.
\end{Proposition}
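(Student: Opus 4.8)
The plan is to avoid manipulating $\pseudo{(TX)}$ directly through any reverse-order law for the pseudoinverse. The tempting identity $\pseudo{(TX)} = \pseudo{X}T^{-1}$ is \emph{false} for general $X$, so the invertibility of $T$ cannot be exploited that naively; this is precisely the trap the statement invites. Instead I would use the coordinate-free characterization of $\pseudo{X}X$ as an orthogonal projector, which turns out to depend on $X$ only through its kernel.

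First I would verify that for any $M\in\R{}^{\stateDim\times\nrSnapshots}$ the matrix $P_M \vcentcolon= \pseudo{M}M$ is the orthogonal projector onto $(\ker M)^\perp$. From the defining Penrose conditions $M\pseudo{M}M = M$, $\pseudo{M}M\pseudo{M} = \pseudo{M}$, and $(\pseudo{M}M)^\top = \pseudo{M}M$, one gets $P_M^2 = \pseudo{M}M\pseudo{M}M = \pseudo{M}M = P_M$ together with $P_M^\top = P_M$, so $P_M$ is an orthogonal projector. Moreover $P_M v = 0$ forces $MP_M v = Mv = 0$, while conversely $Mv = 0$ gives $P_M v = \pseudo{M}Mv = 0$; hence $\ker P_M = \ker M$ and therefore $\image P_M = (\ker M)^\perp$.

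Second, I would observe that since $T\in\GL{\stateDim}{\R}$ is nonsingular, $TXv = 0$ if and only if $Xv = 0$, so $\ker(TX) = \ker X$. Combining the two steps, $\pseudo{(TX)}(TX)$ and $\pseudo{X}X$ are both the orthogonal projector onto the \emph{same} subspace $(\ker X)^\perp\subseteq\R{}^{\nrSnapshots}$. Since the orthogonal projector onto a fixed subspace is unique, the two matrices coincide.

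The main obstacle is conceptual rather than computational: one must recognize that only the kernel of $X$ — manifestly unchanged under left multiplication by an invertible matrix — governs $\pseudo{X}X$, and resist pulling $T$ through the pseudoinverse. The explicit \SVD computation preceding the statement is an equivalent route in which the cancellation $\widehat{V}\widehat{V}^\top = I_r$ makes the same fact visible in coordinates; the projector argument simply packages it invariantly. The only point requiring care is the appeal to the Penrose identities to establish symmetry and idempotence of $\pseudo{M}M$, which is immediate from the defining equations.
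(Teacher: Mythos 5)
Your proof is correct, and it takes a genuinely different route from the paper. The paper argues in coordinates: starting from the trimmed \SVD $X = U\Sigma V^\top$, it forms a QR decomposition $TU = QR$, then a second trimmed \SVD $R\Sigma = \widehat{U}\widehat{\Sigma}\widehat{V}^\top$, assembles the trimmed \SVD $TX = (Q\widehat{U})\widehat{\Sigma}(V\widehat{V})^\top$, and concludes from $\widehat{V}\widehat{V}^\top = I_r$ that $\pseudo{(TX)}(TX) = V\widehat{V}\widehat{V}^\top V^\top = VV^\top = \pseudo{X}X$. You instead invoke the Penrose conditions to identify $\pseudo{M}M$ as the unique orthogonal projector onto $(\ker M)^\perp$, note that $\ker(TX) = \ker X$ because $T$ is nonsingular, and conclude by uniqueness of the projector; all three steps are verified correctly. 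The trade-off: the paper's computation produces, as a byproduct, an explicit trimmed \SVD of the transformed data matrix $TX$ in terms of that of $X$, which makes visible in coordinates that the right singular subspace is untouched by the transformation; your argument is shorter, coordinate-free, and isolates the real mechanism --- $\pseudo{X}X$ depends on $X$ only through its kernel (equivalently, its row space). It also generalizes immediately beyond the stated hypothesis: the same proof works for any, not necessarily square, matrix $T$ with $\ker T \cap \image X = \{0\}$, a conclusion that is not directly delivered by the paper's QR-based construction. Your warning about the false reverse-order identity $\pseudo{(TX)} = \pseudo{X}T^{-1}$ is also well placed; the paper uses that identity only later, under the additional hypotheses ($T$ unitary or $\rank X = \stateDim$) where it is actually valid.
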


With these preparations, we can now show that the \DMD approximation is partially invariant to general regular transformations applied to the training data. More precisely, a data transformation only affects the part of the \DMD approximation that is not in the image of the data. 

\begin{Theorem}
	\label{thm:invariance}
	For given data $(x_i)_{i=0}^{\nrSnapshots}$ consider the matrices $\dmdX$ and $\dmdY$ as defined in~\eqref{eq:DMDdata} and the corresponding \DMD matrix $\dmdAg\in\R^{\stateDim\times\stateDim}$. Consider $T\in \GL{n}{\R}$ and let
	\begin{equation*}
		\tilde{\dmdX} \vcentcolon= T\dmdX 
		\quad\text{and}\quad
		\tilde{\dmdY} \vcentcolon= T\dmdY
	\end{equation*}
	be the matrices of the transformed data. Let $\tdmdAg \vcentcolon= \tilde{\dmdY}\pseudo{\tilde{\dmdX}}$ denote the \DMD matrix for the transformed data.
	Then the \DMD matrix is invariant under the transformation in the image of~$\dmdX$, i.e.,
	\begin{equation*}
		\dmdAg \dmdX = T^{-1}\tdmdAg T \dmdX = T^{-1}\tdmdAg \tilde{\dmdX}.
	\end{equation*}
  	Moreover, if $T$ is unitary or $\rank(X) = n$, then
  \begin{equation}
  \label{eq:DMDtransformation}
    \dmdAg = T^{-1}\tdmdAg T.
  \end{equation}
\end{Theorem}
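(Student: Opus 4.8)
The plan is to prove the two parts of \Cref{thm:invariance} separately, using \Cref{prop:MPItransformation} as the main engine for the first part. The key algebraic fact to keep in mind is that the \DMD matrix $\dmdAg = \dmdY\pseudo{\dmdX}$, while the transformed data satisfies $\tilde\dmdX = T\dmdX$ and $\tilde\dmdY = T\dmdY$, so that $\tdmdAg = T\dmdY\pseudo{(T\dmdX)}$.

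For the invariance in the image of $\dmdX$, first I would compute $T^{-1}\tdmdAg\tilde\dmdX$ directly. Substituting the definitions gives $T^{-1}\tdmdAg\tilde\dmdX = T^{-1}\bigl(T\dmdY\pseudo{(T\dmdX)}\bigr)(T\dmdX) = \dmdY\,\pseudo{(T\dmdX)}(T\dmdX)$. Now \Cref{prop:MPItransformation} tells us exactly that $\pseudo{(T\dmdX)}(T\dmdX) = \pseudo{\dmdX}\dmdX$, so the right-hand side becomes $\dmdY\pseudo{\dmdX}\dmdX = \dmdAg\dmdX$. This establishes $\dmdAg\dmdX = T^{-1}\tdmdAg\tilde\dmdX$, and since $\tilde\dmdX = T\dmdX$ the middle expression $T^{-1}\tdmdAg T\dmdX$ is just a rewriting. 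The main work here is really just recognizing that \Cref{prop:MPItransformation} is precisely the identity needed to cancel the transformation on the projection $\pseudo{\dmdX}\dmdX$, which is the orthogonal projector onto $\image(\dmdX^\top)$ (equivalently, it acts as the identity on $\spann\{\dmdX\}$ after multiplication).

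For the stronger conclusion \eqref{eq:DMDtransformation}, I would split into the two hypothesized cases. If $\rank(\dmdX) = \stateDim$, then $\dmdX$ has full row rank, so $\pseudo{\dmdX}\dmdX$ and $\pseudo{(T\dmdX)}(T\dmdX)$ are both $\stateDim\times\stateDim$ projectors of rank $\stateDim$, hence equal to $I_\stateDim$; consequently $\dmdAg\dmdX$ determines $\dmdAg$ on all of $\R^\stateDim$ and the displayed identity in the image upgrades to the full matrix identity $\dmdAg = T^{-1}\tdmdAg T$. The case where $T$ is unitary is the more delicate one and I expect it to be the main obstacle: here $\rank(\dmdX)$ may be strictly less than $\stateDim$, so one cannot argue via full rank. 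Instead I would use that for unitary $T$ the pseudoinverse satisfies $\pseudo{(T\dmdX)} = \pseudo{\dmdX}T^{-1} = \pseudo{\dmdX}T^\top$, which follows because $T\dmdX = T(U\Sigma V^\top)$ with $TU$ still having orthonormal columns, so $(TU)(\Sigma)(V^\top)$ is already a valid trimmed \SVD of $T\dmdX$ and its pseudoinverse is $V\Sigma^{-1}(TU)^\top = V\Sigma^{-1}U^\top T^\top = \pseudo{\dmdX}T^\top$.

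Using this formula for the unitary case, I would compute $T^{-1}\tdmdAg T = T^{-1}(T\dmdY)\pseudo{(T\dmdX)}T = \dmdY\,\pseudo{\dmdX}T^\top T = \dmdY\pseudo{\dmdX} = \dmdAg$, where the penultimate step uses $T^\top T = I_\stateDim$ since $T$ is unitary. This gives \eqref{eq:DMDtransformation} directly without any rank assumption. The crux of the whole argument is thus the commutation identity $\pseudo{(T\dmdX)} = \pseudo{\dmdX}T^\top$ for unitary $T$, which is precisely the special structure that lets the transformation pass cleanly through the pseudoinverse; for a general invertible $T$ this fails, which is exactly why only the weaker image-restricted invariance holds in that case and why the two extra hypotheses are needed for the full conjugation identity.
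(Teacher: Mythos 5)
Your proof is correct and, for the most part, follows the same route as the paper: the image-invariance part is verbatim the paper's computation (conjugate, cancel $T^{-1}T$, invoke \Cref{prop:MPItransformation} to replace $\pseudo{(T\dmdX)}(T\dmdX)$ by $\pseudo{\dmdX}\dmdX$), and your treatment of the unitary case rests on the identity $\pseudo{(T\dmdX)}=\pseudo{\dmdX}T^{-1}$, which is exactly what the paper asserts---you additionally justify it by observing that $(TU)\Sigma V^\top$ is again a trimmed \SVD, a step the paper leaves implicit. Where you genuinely diverge is the case $\rank(\dmdX)=\stateDim$: the paper again invokes $\pseudo{(T\dmdX)}=\pseudo{\dmdX}T^{-1}$ (which does hold for full row rank and arbitrary invertible $T$, e.g.\ via $\pseudo{\dmdX}=\dmdX^\top(\dmdX\dmdX^\top)^{-1}$), whereas you upgrade the already-proven identity $\dmdAg\dmdX = T^{-1}\tdmdAg T\dmdX$ to the full matrix identity because the columns of $\dmdX$ span $\R^{\stateDim}$; that spanning argument is more elementary and perfectly valid. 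However, your stated justification for it contains a slip: $\pseudo{\dmdX}\dmdX$ is an $\nrSnapshots\times\nrSnapshots$ projector onto the row space of $\dmdX$ (as you yourself note in the first part), so for $\nrSnapshots>\stateDim$ it has rank $\stateDim<\nrSnapshots$ and is \emph{not} the identity; the identity that full row rank buys you is $\dmdX\pseudo{\dmdX}=I_{\stateDim}$. The error is harmless---either drop the projector clause and rely directly on the columns of $\dmdX$ spanning $\R^{\stateDim}$, or multiply $\dmdAg\dmdX = T^{-1}\tdmdAg T\dmdX$ from the right by $\pseudo{\dmdX}$ and use $\dmdX\pseudo{\dmdX}=I_{\stateDim}$---but as written that clause is false.
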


\begin{proof}
	Using \Cref{prop:MPItransformation} we obtain
	\begin{equation*}
		T^{-1}\tdmdAg T\dmdX = T^{-1} T\dmdY \pseudo{(T\dmdX)}T\dmdX = \dmdY\pseudo{\dmdX}\dmdX = \dmdAg\dmdX.
	\end{equation*}
	If $T$ is unitary or $\rank(X) = \stateDim$, then we immediately obtain $\pseudo{(TX)} = \pseudo{X}T^{-1}$, and thus
  \begin{displaymath}
  	T^{-1}\tdmdAg T= T^{-1}T\dmdY\pseudo{T\dmdX}T = \dmdY \pseudo{\dmdX}T^{-1} T = \dmdAg,
  \end{displaymath}
  which concludes the proof.
\end{proof}

While \Cref{thm:invariance} states that \DMD is invariant under transformations
in the image of the data matrix, the invariance in the orthogonal complement of
the image of the data matrix, i.e., equality~\eqref{eq:DMDtransformation}, is in
general not satisfied. We illustrate this observation in the numerical simulations in a
later chapter and in the following analytical example.

\begin{Example}
	Consider the data vectors $\state_i \vcentcolon= [i+1,0]^\top$ for $i=0,1,2$ and $T \vcentcolon= \left[\begin{smallmatrix}1 & 0\\ 1 & 1\end{smallmatrix}\right]$. Then,
	\begin{displaymath}
		\dmdX = \begin{bmatrix}
			1 & 2\\
			0 & 0
		\end{bmatrix}, \quad \dmdY = \begin{bmatrix}
			2 & 3\\
			0 & 0
		\end{bmatrix},  \quad \pseudo{\dmdX} = \tfrac{1}{5}\begin{bmatrix}
			1 & 0\\
			2 & 0
		\end{bmatrix},\quad T\dmdX = \begin{bmatrix}
			1 & 2\\
			1 & 2
		\end{bmatrix}, \quad \pseudo{(T\dmdX)} = \tfrac{1}{10}\begin{bmatrix}
			1 & 2\\
			1 & 2
		\end{bmatrix}.
	\end{displaymath}
	We thus obtain
	\begin{equation*}
		\dmdAg = \tfrac{1}{5}\begin{bmatrix}
			8 & 0\\
			0 & 0
		\end{bmatrix},\qquad
		\tdmdAg = \tfrac{1}{5}\begin{bmatrix}
			4 & 4\\
			4 & 4
		\end{bmatrix},\qquad \text{and}\qquad
		T^{-1}\tdmdAg T = \tfrac{1}{5}\begin{bmatrix}
			8 & 4\\
			0 & 0
		\end{bmatrix},
	\end{equation*}
	confirming that \DMD is invariant under transformations in the image of the data, but not in the orthogonal complement.
\end{Example}

\begin{Remark}
	One can show that in the setting from \Cref{thm:invariance},  the matrix $\widehat{M} \vcentcolon= TA_{\mathrm{DMD}} T^{-1}$ is a minimizer (not necessarily the minimum-norm solution) of 
	\begin{equation*}
		\min_{M\in\mathbb{R}^{\stateDim\times\stateDim}} \frobnorm{\widehat{\dmdY}-M\widehat{\dmdX}}.
	\end{equation*}
\end{Remark}

\subsection{Discrete-time dynamics}
\label{subsec:discreteTime}

In this subsection, we focus on the identification of discrete-time dynamics, which are exemplified by the discrete-time system
\begin{equation}
	\label{eqn:discreteSystem}
	\state_{i+1} = A\state_i
\end{equation}
with initial value $\state_0\in\R^{\stateDim}$ and system matrix $A\in\R^{\stateDim\times\stateDim}$.  The question that we want to answer is to what extend \DMD is able to recover the matrix $A$ solely from data.

\begin{Proposition}
	\label{prop:discreteRealization}
	Consider data $(\state_i)_{i=0}^{\nrSnapshots}$ generated by~\eqref{eqn:discreteSystem}, associated data matrices $\dmdX, \dmdY$ as defined in~\eqref{eq:DMDdata}, and the corresponding \DMD matrix $A_{\mathrm{DMD}}$.  Moreover let $U\Sigma V^\top = \dmdX$ with $U\in\R^{\stateDim\times r}$, $\Sigma\in\GL{r}{\R}$, $V\in\R^{\nrSnapshots\times r}$, and $r \vcentcolon= \rank(\dmdX)$ denote the trimmed \SVD of $\dmdX$ defined in \eqref{eq:DMDdata}. Then
	\begin{equation}
		\label{eq:dmdA-skinnySVD}
		A_{\mathrm{DMD}} = A UU^\top.
	\end{equation}
\end{Proposition}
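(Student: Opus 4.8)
The plan is to exploit the fact that the discrete recurrence~\eqref{eqn:discreteSystem} forces a clean algebraic relation between the two data matrices, and then to reduce the claim to the elementary identity $\dmdX\pseudo{\dmdX} = UU^\top$.

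First I would observe that, since $\state_{i+1} = \Ag\state_i$ holds for every $i$, each column of $\dmdY$ is obtained by applying $\Ag$ to the corresponding column of $\dmdX$. Reading this off column by column gives $\dmdY = \Ag\dmdX$. Substituting into the definition $\dmdAg = \dmdY\pseudo{\dmdX}$ of the \DMD matrix immediately yields $\dmdAg = \Ag\dmdX\pseudo{\dmdX}$, so the whole statement collapses to understanding the matrix $\dmdX\pseudo{\dmdX}$.

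Next I would compute this matrix from the trimmed \SVD. Using the pseudoinverse formula $\pseudo{\dmdX} = V\Sigma^{-1}U^\top$ from~\eqref{eq:moore-penrose-def-svd} together with the orthonormality of the columns of $V$, i.e. $V^\top V = I_{\stateDimRed}$, a direct multiplication gives
\[
	\dmdX\pseudo{\dmdX} = U\Sigma V^\top V\Sigma^{-1}U^\top = U\Sigma\Sigma^{-1}U^\top = UU^\top.
\]
Combining the two displays produces $\dmdAg = \Ag UU^\top$, which is exactly~\eqref{eq:dmdA-skinnySVD}.

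I do not expect any genuine obstacle here: the entire argument rests on the observation $\dmdY = \Ag\dmdX$, after which only the routine verification that $\dmdX\pseudo{\dmdX}$ equals the orthogonal projector $UU^\top$ onto $\image(\dmdX)$ remains. The only point requiring a little care is to invoke $V^\top V = I_{\stateDimRed}$ (orthonormal columns) rather than the generally false identity $VV^\top = I_{\nrSnapshots}$, since $V$ need not be square.
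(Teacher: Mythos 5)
Your proposal is correct and follows essentially the same route as the paper's proof: both use $\dmdY = \Ag\dmdX$ from the recurrence, insert the pseudoinverse formula $\pseudo{\dmdX} = V\Sigma^{-1}U^\top$, and cancel via $V^\top V = I_r$ to obtain $\dmdAg = \Ag UU^\top$. Your remark that one must use $V^\top V = I_r$ rather than $VV^\top = I_{\nrSnapshots}$ is a sensible precaution, but the argument is otherwise identical to the paper's.
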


\begin{proof}
	By assumption, we have
	$\dmdX = \begin{bmatrix}
			\state_0 & A\state_0 & \cdots & A^{\nrSnapshots-1}\state_0
		\end{bmatrix}$ and $\dmdY = A\dmdX = A U\Sigma V^\top$.  We conclude
	\begin{equation*}
		A_{\mathrm{DMD}} = \dmdY\pseudo{\dmdX} = A U\Sigma V^\top V\Sigma^{-1}U^\top = A UU^\top.\qquad\qedhere
	\end{equation*}
\end{proof}

\begin{Remark}
\label{rem:controllability-and-full-rank}
We immediately conclude that \DMD recovers the true dynamics, i.e.,
$A_{\mathrm{DMD}} = A$, whenever $\rank(\dmdX) = \stateDim$. This is the case if
and only if $(A,\state_0)$ is controllable, i.e., $\mathcal C(A, \state_0)$ has
dimension $\stateDim$, and the data set is sufficiently rich, i.e., $\nrSnapshots \geq \stateDim$. 
\end{Remark}

Our next theorem identifies the part of the dynamics that is exactly recovered in the case
that $\rank(\dmdX) < \stateDim$ that occurs for $(A, \state_0)$ not controllable
or $\nrSnapshots < \stateDim$.
\begin{Theorem}
	\label{thm:exactDMD}
	Consider the setting of \Cref{prop:discreteRealization}.  If $\spann\{U\}$ is $A_{\mathrm{DMD}}$-invariant, then the DMD approximation is exact in the image of $U$, i.e.,
	\begin{equation}
		\label{eqn:exactDMD}
		(A^i-A_{\mathrm{DMD}}^i)x = 0\qquad\text{for all $i\geq 0$ and $x\in \spann\{U\}$.}
	\end{equation}
	If in addition, $\ker(A)\cap \spann\{U\}^\perp = \{0\}$, then also the converse direction holds.
\end{Theorem}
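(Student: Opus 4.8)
The plan is to phrase everything through the orthogonal projector $P := UU^\top$ onto $\mathcal{U} := \spann\{U\} = \image(\dmdX)$ and to use the identity $\dmdAg = AP$ supplied by \Cref{prop:discreteRealization}. The one fact I would record at the outset is that $A$ and $\dmdAg$ coincide on $\mathcal{U}$: for $x\in\mathcal{U}$ one has $Px = x$, hence $\dmdAg x = APx = Ax$. This is unconditional and already disposes of the cases $i=0$ and $i=1$ of \eqref{eqn:exactDMD}.

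For the forward implication I would run an induction on $i$ with the strengthened claim that $A^i x = \dmdAg^i x \in \mathcal{U}$ for every $x\in\mathcal{U}$. The case $i=0$ is immediate. For the step, put $y := A^i x$, which by the inductive hypothesis lies in $\mathcal{U}$; then $A^{i+1}x = Ay = \dmdAg y = \dmdAg^{i+1}x$, where the middle equality is the coincidence of $A$ and $\dmdAg$ on $\mathcal{U}$, and the $\dmdAg$-invariance of $\mathcal{U}$ guarantees $\dmdAg y\in\mathcal{U}$, which closes the induction. This direction is routine once the invariance hypothesis is seen to be exactly what keeps the iterates inside $\mathcal{U}$.

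For the converse I expect that only the single instance $i=2$ of \eqref{eqn:exactDMD} is needed. Fix $x\in\mathcal{U}$. Using $\dmdAg x = Ax$ and $\dmdAg = AP$, I would compute $\dmdAg^2 x = \dmdAg(Ax) = AP(Ax)$, so the hypothesis $A^2 x = \dmdAg^2 x$ rearranges to $A\bigl((I-P)Ax\bigr) = 0$. Now $(I-P)Ax\in\mathcal{U}^\perp$ because $I-P$ projects onto $\mathcal{U}^\perp$, and the displayed equation says $(I-P)Ax\in\ker(A)$; the hypothesis $\ker(A)\cap\mathcal{U}^\perp = \{0\}$ then forces $(I-P)Ax = 0$, i.e. $Ax = P(Ax)\in\mathcal{U}$. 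Since $\dmdAg x = Ax$ for $x\in\mathcal{U}$, this says $\dmdAg x\in\mathcal{U}$ for all $x\in\mathcal{U}$, which is precisely the $\dmdAg$-invariance of $\mathcal{U}$.

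The only delicate point — and where I would focus the write-up — is the role of the extra hypothesis $\ker(A)\cap\spann\{U\}^\perp = \{0\}$. It is invoked exactly once, to pass from $A(I-P)Ax = 0$ to $(I-P)Ax = 0$, and it is genuinely necessary: without it $A$ could annihilate a nonzero $\mathcal{U}^\perp$-component of $Ax$, so that the exactness identities $A^i x = \dmdAg^i x$ might hold while $Ax$ escapes $\mathcal{U}$ and invariance fails. I would make sure the proof isolates this implication cleanly, so that it is visible that this is the unique place the condition is used.
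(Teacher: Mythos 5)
Your forward direction is correct and coincides with the paper's own argument: both rest on the fact that $\dmdAg = AUU^\top$ (from \Cref{prop:discreteRealization}) forces $\dmdAg$ and $A$ to agree on $\spann\{U\}$, and both use the invariance hypothesis inductively to keep the iterates inside $\spann\{U\}$ so that this agreement can be applied at every step. For the converse, however, you prove a different statement than the paper does, because the phrase ``the converse direction'' is ambiguous. You read it as the converse of the theorem's outer implication: under $\ker(A)\cap\spann\{U\}^\perp=\{0\}$, exactness~\eqref{eqn:exactDMD} implies $\dmdAg$-invariance of $\spann\{U\}$. Your proof of this is correct, and your refinements are genuine: only the $i=2$ instance of~\eqref{eqn:exactDMD} is needed (the cases $i\le 1$ are automatic), and the kernel condition enters exactly once, to pass from $A\bigl((I-UU^\top)Ax\bigr)=0$ to $(I-UU^\top)Ax=0$. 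The paper instead reads the converse inside~\eqref{eqn:exactDMD}, at the quantifier over $x$: if $(A^i-\dmdAg^i)x=0$ for all $i$, then $x\in\spann\{U\}$. Its proof is a one-line computation that needs neither the invariance hypothesis nor~\eqref{eqn:exactDMD}: writing $x=x_U+x_U^\perp$, \Cref{prop:discreteRealization} gives $(A-\dmdAg)x=Ax_U^\perp$, which is nonzero whenever $x_U^\perp\neq 0$ by the kernel condition, so exactness already fails at $i=1$ for any $x\notin\spann\{U\}$. The two converses are logically independent and both are true under the kernel hypothesis: yours upgrades ``invariance $\Rightarrow$ exactness'' to an equivalence, clarifying that the invariance assumption is not merely sufficient; the paper's characterizes the set of initial vectors on which the DMD model is exact, which is the version that feeds \Cref{cor:exactDMD} and the numerical experiments (the prediction for an initial value orthogonal to the data span collapses). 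If you want your write-up to also cover the paper's reading, append its one-line computation; nothing in your argument needs to change.
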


\begin{proof}
	Let $x\in \spann\{U\}$. Since $\spann\{U\}$ is $A_{\mathrm{DMD}}$ invariant, we conclude $A_{\mathrm{DMD}}^ix\in\spann\{U\}$ for $i\geq 0$, i.e., there exist $y_i\in\R^r$ such that $A_{\mathrm{\DMD}}^ix = Uy_i$. Using \Cref{prop:discreteRealization} we conclude
	\begin{equation*}
		A_{\mathrm{DMD}}^{i+1}x = A_{\mathrm{DMD}} A_{\mathrm{DMD}}^i x = A_{\mathrm{DMD}} Uy_i = Ay_i.
	\end{equation*}
	The proof of~\eqref{eqn:exactDMD} follows via induction over $i$. For the converse direction, let $x = x_U + x_U^\perp$ with $x_U\in\spann\{U\}$ and $x_U^\perp \in \spann\{U\}^\perp$.  \Cref{prop:discreteRealization} and~\eqref{eqn:exactDMD} imply
	\begin{equation*}
		(A-A_{\mathrm{DMD}})x = Ax_U^\perp \neq 0,
	\end{equation*}
	which completes the proof.
\end{proof}

\begin{Remark}
	The proof of~\Cref{thm:exactDMD} details that $\spann\{U\}$ is $A_{\mathrm{DMD}}$-invariant if and only if $\spann\{U\}$ is $A$-invariant.  Moreover, $\spann\{U\} = \spann\{\dmdX\}$ implies that this condition can be checked easily during the data-generation process.  If we further assume that the data is generated via~\eqref{eqn:discreteSystem}, then this is the case,  whenever
	\begin{equation*}
		\rank\left(\begin{bmatrix}
			\state_0 & \cdots & \state_i
		\end{bmatrix}) = \rank(\begin{bmatrix}
			\state_0 & \cdots & \state_{i+1}
		\end{bmatrix}\right)
	\end{equation*}
	for some $i\geq 0$.
\end{Remark}

\subsection{Continuous-time dynamics and RK approximation}
\label{subsec:continuousTime}
 
Suppose now that the data $(\state_i)_{i=0}^{\nrSnapshots}$ is generated from a continuous process, i.e., via the dynamical system~\eqref{eq:IVP}. In this case, we are interested in recovering the continuous dynamics from the \DMD approximation. As a consequence of \Cref{thm:exactDMD} we immediately obtain the following results for exact sampling.

\begin{Corollary}
	\label{cor:exactDMD}
	Let $A_{\mathrm{DMD}}$ be the \DMD matrix for the sequence $\state_i = \exp(iF \timeStep)\state_0\in\mathbb{R}^{\stateDim}$ for $i=1,\ldots,\nrSnapshots$ with $m\geq n$.  Then 
	\begin{equation*}
		\state(i\timeStep;\tilde{\state}_0) = A_{\mathrm{DMD}}^i \tilde{\state}_0
	\end{equation*}
	if and only if $\tilde{\state}_0\in\spann\{\state_0,\ldots,\state_{\nrSnapshots}\}$, where $\state(t;\tilde{\state}_0)$ denotes the solution of the \IVP~\eqref{eq:IVP} with initial value $\tilde{\state}_0$.
\end{Corollary}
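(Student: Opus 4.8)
The plan is to reduce the statement to the discrete-time result \Cref{thm:exactDMD} by observing that exact sampling of a linear flow produces data governed by a discrete recursion with a very particular system matrix. Setting $A \vcentcolon= \exp(F\timeStep)$, the semigroup property of the matrix exponential gives $\state_{i+1} = \exp((i+1)F\timeStep)\state_0 = \exp(F\timeStep)\exp(iF\timeStep)\state_0 = A\state_i$, so the sequence $(\state_i)$ is generated by the discrete system~\eqref{eqn:discreteSystem} with this specific $A$. This places us exactly in the setting of \Cref{prop:discreteRealization} and \Cref{thm:exactDMD}, and it remains only to check that the two structural hypotheses of that theorem are automatically satisfied here.

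First I would verify the invariance hypothesis. Since $\dmdX = \begin{bmatrix}\state_0 & A\state_0 & \cdots & A^{\nrSnapshots-1}\state_0\end{bmatrix}$, its column space is the Krylov space $\spann\{\state_0, A\state_0, \ldots, A^{\nrSnapshots-1}\state_0\}$; because $\nrSnapshots\geq\stateDim$, the Cayley--Hamilton theorem forces this to coincide with the reachable space $\mathcal C(\state_0, A)$, which is by construction $A$-invariant. By the remark following \Cref{thm:exactDMD}, $\spann\{U\}=\spann\{\dmdX\}$ being $A$-invariant is equivalent to its being $\dmdAg$-invariant, so the hypothesis of \Cref{thm:exactDMD} holds. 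In particular $\state_\nrSnapshots = A\state_{\nrSnapshots-1}\in\spann\{U\}$, so $\spann\{U\}=\spann\{\state_0,\ldots,\state_\nrSnapshots\}$, matching the index set in the statement.

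Next I would invoke \Cref{thm:exactDMD} in both directions. The forward direction gives $(A^i-\dmdAg^i)x=0$ for all $i\geq 0$ and $x\in\spann\{U\}$; using $A^i=\exp(iF\timeStep)$ this reads $\state(i\timeStep;\tilde{\state}_0)=A^i\tilde{\state}_0=\dmdAg^i\tilde{\state}_0$ for $\tilde{\state}_0\in\spann\{U\}$, which is the ``if'' implication. For the converse, the only extra condition in \Cref{thm:exactDMD} is $\ker(A)\cap\spann\{U\}^\perp=\{0\}$; but $A=\exp(F\timeStep)$ is always nonsingular, so $\ker(A)=\{0\}$ and this condition is vacuous. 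Hence for $\tilde{\state}_0\notin\spann\{U\}$ already the first step fails, $\state(\timeStep;\tilde{\state}_0)=A\tilde{\state}_0\neq\dmdAg\tilde{\state}_0$, so the claimed identity cannot hold for every $i$, which gives the ``only if'' implication.

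The argument is almost entirely bookkeeping; the two facts that make the corollary ``immediate'' are, on the one hand, that sampling a \emph{linear} flow yields genuinely linear discrete data whose span is a reachable space (hence automatically invariant once $\nrSnapshots\geq\stateDim$), and on the other hand that $\exp(F\timeStep)$ is invertible, which removes the kernel obstruction that \Cref{thm:exactDMD} needed for its converse. I expect the only point deserving care to be the indexing mismatch between $\spann\{\dmdX\}=\spann\{\state_0,\ldots,\state_{\nrSnapshots-1}\}$ and the set $\spann\{\state_0,\ldots,\state_\nrSnapshots\}$ appearing in the statement, resolved by noting that $\state_\nrSnapshots=A\state_{\nrSnapshots-1}$ already lies in the $A$-invariant column space.
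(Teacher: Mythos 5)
Your proposal is correct and follows essentially the same route as the paper: both reduce the statement to the discrete-time identification results by setting $A=\exp(F\timeStep)$ and exploiting that the matrix exponential is nonsingular, which is precisely the observation the paper's one-line proof invokes alongside \Cref{prop:discreteRealization}. Your write-up simply makes explicit what the paper compresses into ``follows immediately'' — the Cayley--Hamilton/reachability argument showing that $\spann\{U\}$ is $A$-invariant (so that \Cref{thm:exactDMD} applies), the vacuity of the kernel condition, and the index bookkeeping $\state_\nrSnapshots\in\spann\{U\}$ — so it is a faithful expansion rather than a different proof.
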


\begin{proof}
	The assertion follows immediately from \Cref{prop:discreteRealization} with the observation that $\exp(i F\timeStep)$ is nonsingular.
\end{proof}

We conclude that we can recover the continuous dynamics with the matrix logarithm (see \cite{Hig08} for further details), whenever $\rank(\dmdX) = \stateDim$. In practical applications, an exact evaluation of the flow map is typically not possible. Instead, a numerical time-integration method is used to approximate the continuous dynamics.  

Suppose we have used a \RKM with constant step size $\timeStep>0$ to obtain a numerical approximation $(\state_i)_{i=0}^{\nrSnapshots}\subseteq\mathbb{R}^{\stateDim}$ of the \IVP~\eqref{eq:IVP} and used this data to construct the \DMD matrix $A_{\mathrm{DMD}}\in\mathbb{R}^{\stateDim\times\stateDim}$ as in \Cref{def:exactDMD}. If we now want to use the \DMD matrix to obtain an approximation for a different initial condition, say $\state(0) = \tilde{\state}_0$, we are interested in quantifying the error
\begin{equation*}
	\|\state(i\timeStep;\tilde{\state}_0) - A_{\mathrm{DMD}}^i\tilde{\state}_0\|.
\end{equation*}

\begin{Theorem}
	\label{thm:DMDapproximationError}
	Suppose that the sequence $(\state_i)_{i=0}^\nrSnapshots$, with
  $\state_i\in\R^{\stateDim}$ for $i=0,\ldots,\nrSnapshots$, is generated from the linear \IVP~\eqref{eq:IVP} via a \RKM of order $p$ and step size $\timeStep>0$ and satisfies
	\begin{equation*}
		\spann\{\state_0,\ldots,\state_{\nrSnapshots-1}\} = \spann\{\state_0,\ldots,\state_{\nrSnapshots}\}. 
	\end{equation*}
	Let $A_{\mathrm{DMD}}\in\R^{\stateDim\times\stateDim}$ denote the associated \DMD matrix. Then there exists a constant $C\geq 0$ such that
	\begin{equation}
		\label{eq:errorEstimate}
		\|\state(ih;\tilde{\state}_0)-A_{\mathrm{DMD}}^i\tilde{\state}_0\| \leq Ch^p
	\end{equation}
	holds for any $\tilde{\state}_0\in\spann(\{\state_0, \ldots,\state_{\nrSnapshots-1}\})$.
\end{Theorem}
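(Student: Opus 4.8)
The plan is to show that, on the subspace in question, the \DMD iteration reproduces the \RKM iteration exactly, and then to read off~\eqref{eq:errorEstimate} from the standard global error bound for the \RKM. First I would observe that, because the snapshots are produced by the \RKM applied to~\eqref{eq:IVP}, they satisfy the discrete recursion $\state_{i+1} = \Ad\state_i$ with $\Ad$ as in~\eqref{eq:discretizationRungeKutta}; in particular $\dmdY = \Ad\dmdX$, so the hypotheses of \Cref{prop:discreteRealization} hold with system matrix $\Ad$. I would then translate the rank hypothesis $\spann\{\state_0,\ldots,\state_{\nrSnapshots-1}\} = \spann\{\state_0,\ldots,\state_{\nrSnapshots}\}$ into an invariance statement: since $\spann\{\dmdY\} = \Ad\spann\{\dmdX\}$ and $\spann\{\dmdX\} = \spann\{U\}$, the hypothesis says exactly $\spann\{\dmdY\}\subseteq\spann\{U\}$, i.e., that $\spann\{U\}$ is $\Ad$-invariant. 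By \Cref{prop:discreteRealization} we have $\dmdAg = \Ad UU^\top$, which acts as $\Ad$ on $\spann\{U\}$; hence $\spann\{U\}$ is $\dmdAg$-invariant as well, and the hypothesis of \Cref{thm:exactDMD} is met.

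Next I would apply \Cref{thm:exactDMD} with $A=\Ad$ to obtain $(\Ad^i-\dmdAg^i)x = 0$ for all $i\geq 0$ and all $x\in\spann\{U\}$. Since every admissible $\tilde{\state}_0$ lies in $\spann\{U\} = \spann\{\state_0,\ldots,\state_{\nrSnapshots-1}\}$, this gives
\begin{equation*}
	\dmdAg^i\tilde{\state}_0 = \Ad^i\tilde{\state}_0\qquad\text{for all } i\geq 0.
\end{equation*}
The crucial consequence is that the \DMD iterate coincides with the $i$-th \RKM approximation of the solution of~\eqref{eq:IVP} started at $\tilde{\state}_0$, so the quantity to be estimated is precisely the \RKM global discretization error $\|\state(ih;\tilde{\state}_0)-\Ad^i\tilde{\state}_0\|$. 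Invoking the order-$p$ global error estimate recalled after~\eqref{eq:locerror-estimate} then produces a bound of the form $Ch^p$, which is~\eqref{eq:errorEstimate}.

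The hard part will be the uniformity of the constant $C$. The textbook global estimate is stated for one fixed initial value over a fixed time horizon, whereas~\eqref{eq:errorEstimate} asks for a single $C$ valid for every $\tilde{\state}_0$ in the subspace. Here linearity saves the day: the error is the linear map $\exp(Fih)-\Ad^i$ applied to $\tilde{\state}_0$, so $\|\state(ih;\tilde{\state}_0)-\Ad^i\tilde{\state}_0\|\leq \|\exp(Fih)-\Ad^i\|\,\|\tilde{\state}_0\|$ and the operator-norm factor is $O(h^p)$ by the order of the method, independently of $\tilde{\state}_0$. Consequently the dependence on $\tilde{\state}_0$ enters only through $\|\tilde{\state}_0\|$, which is absorbed into $C$; the remaining subtlety is to fix the range of the step index $i$ (equivalently the time horizon $T=ih$) on which $C$ is permitted to depend, since the global constant grows with $T$.
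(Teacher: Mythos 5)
Your proposal follows exactly the paper's route: recognize the snapshots as iterates of the discrete \RKM matrix $\Ad$, invoke \Cref{thm:exactDMD} to get $\Ad^i\tilde{\state}_0 = \dmdAg^i\tilde{\state}_0$ for all $\tilde{\state}_0\in\spann\{\state_0,\ldots,\state_{\nrSnapshots-1}\}$, and then read off the bound from the classical global error estimate for order-$p$ one-step methods. Your two additions—explicitly verifying that the rank hypothesis gives the $\dmdAg$-invariance of $\spann\{U\}$ required by \Cref{thm:exactDMD} (which the paper only addresses in a separate remark), and noting that the constant $C$ must absorb $\|\tilde{\state}_0\|$ and depends on the time horizon $ih$—are legitimate refinements of details the paper's terser proof leaves implicit.
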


\begin{proof}
	Since the data $(\state_i)_{i=0}^\nrSnapshots$ is generated from a \RKM there
  exists a matrix $A_h\in\R^{\stateDim\times\stateDim}$ such that $\state_{i+1}
  = A_h\state_i$ for $i=0,\ldots,\nrSnapshots-1$. Let $\tilde{\state}_0\in
  \spann(\{\state_0, \ldots,\state_{\nrSnapshots-1}\})$. Then,
  \Cref{thm:exactDMD} implies $A_h^i\tilde{\state}_0 =
  A_{\mathrm{DMD}}^i\tilde{\state}_0$ for any $i\geq 0$. Thus the result follows
  from the classical error estimates for RKM (see, e.g.,
  \cite[Thm.~II.3.6]{HaiNW08}) and the from equality
	\begin{equation*}
		\|\state(i\timeStep;\tilde{\state}_0) - A_{\mathrm{DMD}}^i\tilde{\state}_0\| = \|\state(i\timeStep;\tilde{\state}_0) - \Ad^i\tilde{\state}_0\| \leq C\timeStep^p
	\end{equation*}
	for some $C\geq 0$, since the \RKM is of order $p$.
\end{proof}

The proof details that due to \Cref{prop:discreteRealization} we are essentially able to recover the discrete dynamics $A_h$ obtained from the \RKM via \DMD, provided that $\rank(\dmdX) = \stateDim$. As layed out in \Cref{rem:controllability-and-full-rank}, this condition is equivalent to $(A_h,\state_0)$ controllable for which controllability of $(F,\state_0)$ is a necessary condition.

The question that remains to be answered is whether it is possible to recover the continuous dynamic matrix $F$ from the discrete dynamics $A_{\mathrm{DMD}}$ (respectively $A_h$) provided that the Runge-Kutta scheme is known that was used to discretize the continuous dynamics. For any $1$-state Runge-Kutta method $(\alpha,\beta)$, i.e, $s=1$ in~\eqref{eq:RKmethod} this is indeed the case,  since then \eqref{eq:discretizationRungeKutta} simplifies to
\begin{equation*}
	A_h = I_{\stateDim} + \timeStep \beta (I_{\stateDim} - \timeStep\alpha F)^{-1}F,
\end{equation*}
which yields
\begin{equation*}
	F = -\frac{1}{\timeStep}(I_{\stateDim} - A_h)\left(\alpha A_h + (\beta-\alpha)I_{\stateDim}\right)^{-1}.
\end{equation*}
Combining \eqref{eq:reverseRKs1} with \Cref{prop:discreteRealization} yields the following result.

\begin{Lemma}
	\label{lem:exactRKDMD}
	Suppose that the sequence $(\state_i)_{i=0}^\nrSnapshots\subseteq\mathbb{R}^{\stateDim}$ is generated from the linear \IVP~\eqref{eq:IVP} via the $1$-stage Runge-Kutta method $(\alpha,\beta)$ and step size $\timeStep>0$. Let $A_{\mathrm{DMD}}\in\mathbb{R}^{\stateDim\times\stateDim}$ denote the associated \DMD matrix. If $\rank(\{\state_0,\ldots,\state_{\nrSnapshots-1}\}) = \stateDim$, then
	\begin{equation}
		\label{eq:reverseRKs1}
		F = -\frac{1}{\timeStep}(I_{\stateDim} - A_{\mathrm{DMD}})\left(\alpha A_{\mathrm{DMD}} + (\beta-\alpha)I_{\stateDim}\right)^{-1},
	\end{equation}
	provided the inverse exists. 
\end{Lemma}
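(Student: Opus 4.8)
The plan is to reduce the lemma to two facts already in hand: the explicit form of the one-stage discretization and the exact discrete-time identification of \Cref{prop:discreteRealization}. Concretely, I would first argue that under the full-rank hypothesis the \DMD matrix coincides with the Runge--Kutta propagation matrix, $\dmdAg = \Ad$, and then simply solve the scalar-coefficient relation specializing \eqref{eq:discretizationRungeKutta} to $s=1$ for $F$ and substitute.

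For the identification step, note that since the data is produced by the one-stage \RKM there is a matrix $\Ad\in\R^{\stateDim\times\stateDim}$ with $\state_{i+1}=\Ad\state_i$, so that $\dmdX = [\,\state_0\ \Ad\state_0\ \cdots\ \Ad^{\nrSnapshots-1}\state_0\,]$ and $\dmdY=\Ad\dmdX$. The assumption $\rank(\{\state_0,\ldots,\state_{\nrSnapshots-1}\})=\stateDim$ says precisely that $\rank(\dmdX)=\stateDim$, so in the trimmed \SVD $\dmdX=U\Sigma V^\top$ the matrix $U\in\R^{\stateDim\times\stateDim}$ is orthogonal and $UU^\top=I_{\stateDim}$. \Cref{prop:discreteRealization} then yields $\dmdAg = \Ad\,UU^\top = \Ad$, exactly as recorded in \Cref{rem:controllability-and-full-rank}.

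It remains to invert the one-stage discretization $\Ad = I_{\stateDim} + \timeStep\beta(I_{\stateDim}-\timeStep\alpha F)^{-1}F$. The key structural fact I would invoke is that $\Ad$ is a polynomial in $F$ (the representation $\Ad=\tilde p(F)$ from the Remark following \Cref{ass:suitableStepsize}), so that $F$ and $\Ad$ commute. Multiplying $\Ad - I_{\stateDim} = \timeStep\beta(I_{\stateDim}-\timeStep\alpha F)^{-1}F$ on the left by $(I_{\stateDim}-\timeStep\alpha F)$ and using commutativity, I can collect the $F$-terms into $\Ad - I_{\stateDim} = \timeStep F\bigl(\alpha\Ad + (\beta-\alpha)I_{\stateDim}\bigr)$; inverting the bracket (which is the invertibility hypothesis of the lemma) and substituting $\dmdAg=\Ad$ produces the claimed formula \eqref{eq:reverseRKs1}.

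The main obstacle --- really the only nonroutine point --- is the commutativity of $F$ and $\Ad$: without it the rearrangement above would not close into a clean right-inverse expression, and the two factors on the right of \eqref{eq:reverseRKs1} could not be written in the stated order. This is resolved entirely by the polynomial representation of $\Ad$ in $F$. Everything else is elementary linear algebra, and the invertibility of $\alpha\dmdAg+(\beta-\alpha)I_{\stateDim}$ is assumed outright in the statement.
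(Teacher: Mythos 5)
Your proposal is correct and takes essentially the same route as the paper: under the full-rank hypothesis \Cref{prop:discreteRealization} gives $A_{\mathrm{DMD}}=UU^\top A_h^{\phantom{\top}}\!=A_h$ (with $U$ square orthogonal), and then one inverts the one-stage specialization of \eqref{eq:discretizationRungeKutta} to solve for $F$. One minor remark: the commutativity you single out as the key obstacle is in fact not needed, since expanding $(I_{\stateDim}-h\alpha F)(A_h-I_{\stateDim})=h\beta F$ and factoring $F$ on the left already yields $A_h-I_{\stateDim}=hF\left(\alpha A_h+(\beta-\alpha)I_{\stateDim}\right)$ directly.
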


If the assumption of \Cref{lem:exactRKDMD} holds, then we can recover the continuous dynamic matrix from the \DMD approximation. The corresponding formula for popular one-stage methods is presented in \Cref{tab:continuousDynamicsFromDMD}. 
\begin{table}[ht]
	\centering
	\caption{Identification of continous-time systems via \DMD with one-stage Runge-Kutta methods}
	\label{tab:continuousDynamicsFromDMD}
	\begin{tabular}{lll}
		\toprule
		\textbf{method} & $(\alpha,\beta)$ & \Cref{lem:exactRKDMD}\\\midrule
		explicit Euler & $(0,1)$ & $F = -\tfrac{1}{\timeStep}(I_{\stateDim}-A_{\mathrm{DMD}})$\\
		implicit Euler & $(1,1)$ & $F = \frac{1}{\timeStep}(I_{\stateDim}-A_{\mathrm{DMD}}^{-1})$\\
		implicit midpoint rule & $(\tfrac{1}{2},1)$ & $F = \tfrac{1}{2h}(A_{\mathrm{DMD}}-I_{\stateDim})(A_{\mathrm{DMD}}+I_{\stateDim})^{-1}$\\\bottomrule
	\end{tabular}
\end{table}
In this scenario, let us emphasize that we can compute the discrete dynamics with the \DMD approximation for any time-step.

The situation is different for $s\geq 2$, as we illustrate with the following example.

\begin{Example}
	For given $\timeStep>0$, consider $F_1 \vcentcolon= 0$ and $F_2 \vcentcolon= -\tfrac{2}{\timeStep}$.  Then, for Heun's method, i.e., $\RKmatrix  = \left[\begin{smallmatrix} 0 & 0\\1 & 0\end{smallmatrix}\right]$ and $\RKvector^\top = \left[\begin{smallmatrix} \tfrac{1}{2} & \tfrac{1}{2}\end{smallmatrix}\right]$, we obtain $A_h = p(F)$ with $p(x) = 1 + hx + \tfrac{h^2}{2}x^2$, and thus $p(F_1) = p(F_2)$.  In particular, we cannot distinguish the continuous-time dynamics in this specific scenario.
\end{Example}

\section{Numerical examples}
\providecommand\inival{\state_0}  
\providecommand\diagmat[1]{\ensuremath{\mathsf{diag}(#1)}}

To illustrate our analytical findings, we have constructed a dynamical system that exhibits some fast dynamics that is stable but not exponentially stable and has a nontrivial but exactly computable flow map. In this way, we can check the approximation both qualitatively and quantitatively. Also, the system can be scaled to arbitrary state-space dimensions. Most importantly, for our purposes, the system is designed such that for any initial value, the space not reached by the system is as least as large as the reachable space.
The complete code of our numerical examples can be found in the
\emph{supplementary material}.

With $N\in \N$, $\Delta\vcentcolon= \diagmat{0,1,\dotsc,N-1}$ we consider the continuous-time dynamics~\eqref{eq:IVP} with 
\begin{equation*}
	F \vcentcolon=
	\begin{bmatrix}
		0 & 2\Delta \\
		0 & -\frac 12 \Delta
	\end{bmatrix}\qquad\text{and}\qquad \exp(tF) = \begin{bmatrix}
		I & 4(I-\exp(-\frac{t}{2}\Delta)) \\
		0 & \exp(-\frac{t}{2} \Delta)
	\end{bmatrix}.
\end{equation*}
Starting with an initial value $\state_0\in \R^{2N}$ we can thus generate exact snapshots of the solution via $\state(t) = \exp(tF)\state_0$, as well as the controllability space
\begin{equation*}
\mathcal{C}(F,\state_0) = \spann\biggl\{
	\inival, 
	\begin{bmatrix}
		0 & 2\Delta \\
		0 & -\frac 12 \Delta
	\end{bmatrix} \inival,
	\begin{bmatrix}
		0 & 2\Delta \\
		0 & -\frac 12 \Delta
	\end{bmatrix}^2 \inival,
  \dotsc,
	\begin{bmatrix}
		0 & 2\Delta \\
		0 & -\frac 12 \Delta
  \end{bmatrix}^{2N-1} \inival
\biggr\}.
\end{equation*}
It is easy to see that $\dim(\mathcal{C}(F,\state_0))\leq N$ with equality if
and only if $\state_0$ has no zero entries. Due to~\eqref{eq:ah-as-polynomial}, we immediately infer
\begin{equation*}
	\dim(\mathcal{C}(A_h,\state_0)) \leq N
\end{equation*}
for any $A_h$ obtained by a Runge-Kutta method. We conclude that \DMD will at most be capable to
reproduce solutions that evolve in~$\mathcal{C}(F,\state_0)$. Indeed, as outlined in \mbox{\Cref{prop:discreteRealization}}, all components of another initial value $\tilde{\state}_0$ that are in the orthogonal complement of
$\mathcal{C}(F,\inival)$ are set to zero in the first \DMD iteration.

For our numerical experiments we set $N\vcentcolon=5$, $\state_0\vcentcolon= [1,2,\ldots,10]^\top$, and consider the time-grid $t_i \vcentcolon= i\timeStep$ for $i=0,1,\ldots,100$ with uniform stepsize $\timeStep=0.1$.  A \SVD of exactly sampled data
\begin{equation}
  \begin{bmatrix} 
    U_1 & U_2
  \end{bmatrix}
  \begin{bmatrix}
    \Sigma_1 & 0 \\ 0 & 0
  \end{bmatrix}
  V^T =
  \begin{bmatrix}
    \inival & \state(\timeStep;\state_0) & \state(2\timeStep;\state_0) & \cdots &  \state(10;\state_0) 
  \end{bmatrix}
\end{equation}
of the matrix of snapshots of the solution $\state(t;\state_0)$, reveals that the solution space
is indeed of dimension $N=5$ and defines the bases $U_1 \in \R^{10,5}$
and $U_2\in\R^{10,5}$ of $\mathcal{C}(F,\state_0)$ and its orthogonal
complement, respectively.

For our numerical experiment, depicted in \Cref{fig:inivcspace}, we choose the initial values
\begin{equation*}
	\tilde{\state}_0 \vcentcolon= U_1e\in \spann(U_1)\qquad\text{and}\qquad 
	\widehat{\state}_0 \vcentcolon= U_2e\in \spann(U_2) = \spann(U_1)^\perp,
\end{equation*}
with $e=[1,1,1,1,1]^\top$.
The exact solution for both initial values is presented in \Cref{fig:exactSolutionInReachableSpace,fig:exactSolutionOutsideReachableSpace}, respectively.  Our simulations confirm the following:
\begin{itemize}
	\item As predicted by \Cref{thm:exactDMD}, the \DMD approximation for the initial value $\tilde{\state}_0$, depicted in \Cref{fig:DMDInReachableSpace}, exactly recovers the exact solution, while the \DMD approximation for the initial value $\widehat{\state}_0$ (cf.~\Cref{fig:DMDOutsideReachableSpace}) is identically zero.  
	\item If we first transform the data with the matrix
\begin{equation*}
	T = \begin{bmatrix}
		1 & 1\\
		& \ddots & \ddots\\
		& & \ddots & 1\\
		& & \phantom{\ddots} & 1
	\end{bmatrix}\in \GL{2N}{\R},
\end{equation*}
then compute the \DMD approximation, and then transform the results back, the \DMD approximation for $\tilde{\state}_0$ remains unchanged, see \Cref{fig:TDMDInReachableSpace}, confirming~\eqref{eq:DMDtransformation} from~\Cref{thm:invariance}. In contrast, the prediction of the dynamics for $\widehat{\state}_0$ changes (see~\Cref{fig:TDMDOutsideReachableSpace}), highlighting that \DMD is not invariant under state-space transformations in the orthogonal complement of the data.
\end{itemize}

\begin{figure}[ht]
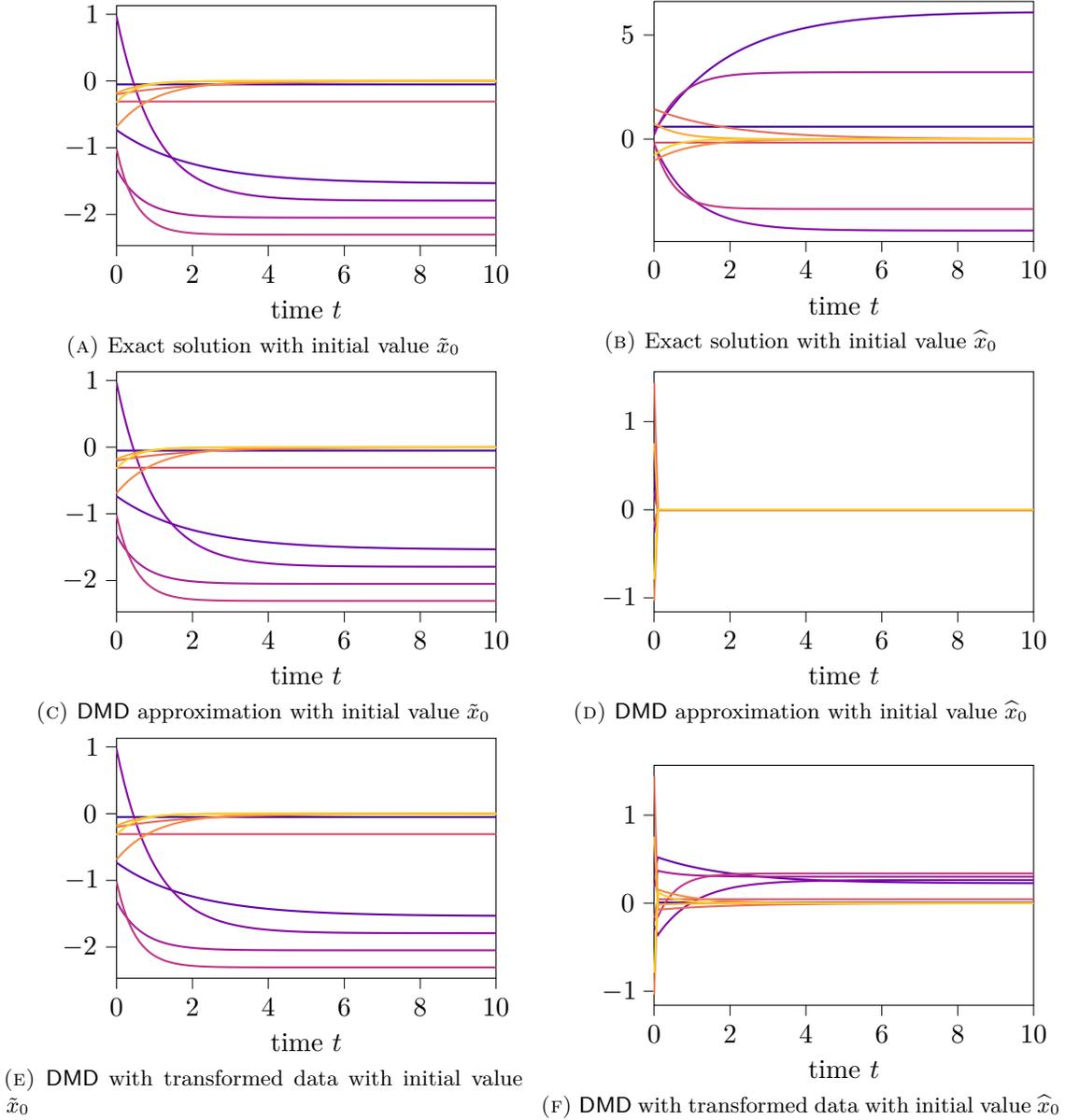

  \centering
  \newlength\figureheight
  \setlength\figureheight{5cm}
  \pgfplotsset{yticklabel style={text width=3em, align=right}}

  \begin{subfigure}{0.49\linewidth}
    \input{txz-exact.tex}\\[-1.5em]
     \caption{Exact solution with initial value $\tilde{\state}_0$}
    \label{fig:exactSolutionInReachableSpace}
  \end{subfigure}\hfill
  \begin{subfigure}{0.49\linewidth}
    \input{hxz-exact.tex}\\[-1.5em]
    \caption{Exact solution with initial value $\widehat{\state}_0$}
    \label{fig:exactSolutionOutsideReachableSpace}
  \end{subfigure}\\

  \begin{subfigure}[b]{0.49\linewidth}
    \input{txz-dmd.tex}\\[-1.5em]
    \caption{\DMD approximation with initial value $\tilde{\state}_0$}
    \label{fig:DMDInReachableSpace}
  \end{subfigure}\hfill
  \begin{subfigure}[b]{0.49\linewidth}
    \input{hxz-dmd.tex}\\[-1.5em]
    \caption{\DMD approximation with initial value $\widehat{\state}_0$}
    \label{fig:DMDOutsideReachableSpace}
  \end{subfigure}

  \begin{subfigure}[b]{0.49\linewidth}
    \input{txz-tdmd.tex}\\[-1.5em]
    \label{fig:TDMDInReachableSpace}
    \caption{\DMD with transformed data with initial value $\tilde{\state}_0$}
  \end{subfigure}\hfill
  \begin{subfigure}[b]{0.49\linewidth}
    \input{hxz-tdmd.tex}\\[-1.5em]
    \label{fig:TDMDOutsideReachableSpace}
    \caption{\DMD with transformed data with initial value $\widehat{\state}_0$}
  \end{subfigure}

	 \caption{Comparison of the exact solution, \DMD approximation, and \DMD approximation based on transformed data, for initial values inside, the reachable subspace, i.e., $\tilde{\state}_0\in\mathcal{C}(F,\state_0)$, and
   outside the reachable subspace, i.e., $\widehat{\state}_0\in\mathcal{C}(F,\state_0)^\perp$.}
	\label{fig:inivcspace}
\end{figure}

\subsection*{Acknowledgments} We thank Dr.~Robert Altmann for inviting us to the Sion workshop, where we started this work.
 B. Unger acknowledges funding from the DFG under Germany's Excellence Strategy -- EXC 2075 -- 390740016 and is thankful for support by the Stuttgart Center for Simulation Science (SimTech).

\bibliographystyle{siam}
\bibliography{literature}

\end{document}